\DeclareMathOperator{\N}{\mathbb{N}}
\DeclareMathOperator{\Q}{\mathbb{Q}}
\DeclareMathOperator{\A}{\mathbb{A}}
\renewcommand{\P}{\mathbb{P}}
\renewcommand{\a}{\mathfrak{a}}
\renewcommand{\O}{\mathcal{O}}
\DeclareMathOperator{\Gr}{\operatorname{Gr}}
\DeclareMathOperator{\Jac}{\operatorname{Jac}}
\DeclarePairedDelimiter{\abs}{\lvert}{\rvert}
\DeclarePairedDelimiter{\norm}{\lVert}{\rVert}
\DeclarePairedDelimiter{\set}{\{}{\}}
\theoremstyle{plain}
\newtheorem{theorem}{Theorem}[section]
\newtheorem{lemma}[theorem]{Lemma}
\newtheorem{hypothesis}[theorem]{Hypothesis}
\theoremstyle{definition}
\def\@maketitle{%
  \normalfont\normalsize
  \let\@makefnmark\relax  \let\@thefnmark\relax
  \ifx\@empty\@subjclass\else \@footnotetext{\@setsubjclass}\fi
  \ifx\@empty\@keywords\else \@footnotetext{\@setkeywords}\fi
  \ifx\@empty\thankses\else \@footnotetext{%
    \def\par{\let\par\@par}\@setthanks}\fi
  
  \@settitle
  
  \ifx\@empty\@author\else
    \@setauthors
  \fi
  
  \ifx\@empty\@date\else
    \vskip 1em
    \begin{center}\normalfont\normalsize\@date\end{center}
    \vskip 1em
  \fi
  
  \ifx\@empty\@dedicatory\else \@setdedicatory\fi
  \ifx\@empty\abstracttext\else \@setabstract\fi
  \normalsize
}
\begin{document}
\title{Counting rational points on smooth quartic and quintic surfaces}
\author{Lorenzo Andreaus}
	\address{Institut de Mathématiques de Jussieu - Paris Rive Gauche (IMJ-PRG)\\
		Campus des Grands Moulins\\
		Université Paris Diderot - 8 Place Aurélie Nemours\\
		Paris, France}
	\date{\today}
	\email{lorenzo.andreaus@imj-prg.fr}	
	
	\begin{abstract}
		Let $X\subseteq \P^3$ be a smooth projective surface of degree $d\ge 4$ defined over a number field $K$, and let $N_{X^{\prime}}(B)$ be the number of rational points of $X$ of height at most $B$ that do not lie on lines contained in $X$. Assuming a suitable hypothesis on the size of the rank of Abelian varieties, we show that $N_{X^{\prime}}(B)\ll_{K,d,\varepsilon} B^{4/3+\varepsilon}$ for any fixed $\varepsilon>0$. This improves an unconditional bound from Salberger for $d=4$ and $d=5$. The proof, based on an argument of Heath-Brown, consists of cutting $X$ by projective planes and using a uniform version of Faltings's Theorem, due to Dimitrov, Gao, and Habegger, to bound the number of rational points on the plane sections of $X$. More generally, we prove that if $X\subseteq \P^n$ is a non-degenerate non-uniruled smooth projective surface defined over $K$, then $N_{X^{\prime}}(B)\ll_{K,n,d,\varepsilon}B^{\frac{n+1}{n}+\varepsilon}$.       
	\end{abstract}
\maketitle

\section{Introduction}
Let $K$ be a number field, and let $H_K$ be the height function in $\P^n$ given by:
\[
H_K([x_0\colon \cdots \colon x_n])=\prod_{v}\max\set{\norm{x_0}_v,\dotsc \norm{x_n}_v},
\]
where $v$ runs through the places of $K$ and $\norm{x}_v$ is the normalized absolute value associated with the place $v$ (this is the height function defined in \cite[Section B.2]{hindry}).

We consider a smooth hypersurface $X\subseteq \P^n$ of degree $d$, and we want to bound the number of $K$-rational points of bounded height on $X$. For this, define the counting function
\[N_X(B)\coloneqq \#\set{x\in X(K)\colon H_K(x)\le B}.
\]

For $K=\Q$, Verzobio shows in \cite{verzobio} that for $n\ge 4$ and $d\ge 50$ we have
\[
N_X(B)\ll_{n,d,\varepsilon} B^{n-2+\varepsilon},
\]
and gives other bounds for $n\ge 4$ and $d\ge 6$. It is conjectured that this bound holds for all $n\ge 4$ and $d\ge 3$. 

For $n=3$, it is clear that this bound does not hold if $X$ contains a line defined over $K$, since in this case we would have $N_X(B)\gg B^2$. Therefore, when $X\subseteq \P^3$ is a smooth surface, we are interested in counting the rational points on the set $X^{\prime}\subseteq X$, where $X^{\prime}$ is the complement of the union of all lines on $X$. By Lemma \ref{finite-number}, for $d\ge 4$, $X^{\prime}$ is obtained from $X$ by removing a finite number of lines. We define the counting function
\[
N_{X^{\prime}}(B)\coloneqq \#\set{x\in X^{\prime}(K)\colon H_K(x)\le B}.
\]

For $n=3$ and $K=\Q$, Salberger shows in \cite[Theorem 0.5]{salberger}  the bound
\begin{eqnarray}\label{salberger-equation}
N_{X^{\prime}}(B)\ll_{d} B^{3/\sqrt{d}}(\log B)^4+B.
\end{eqnarray}
We aim to improve this bound for $d=4$ and $5$, using a technique developed by Heath-Brown in \cite{heath-brown} and assuming a conjectural bound for the rank of Abelian varieties. In his paper, Heath-Brown shows that for $d=3$ and assuming such a hypothesis, we have, for any fixed $\varepsilon>0$, a bound \[
N_{X^{\prime}}(B)\ll_{X,\varepsilon} B^{4/3+\varepsilon}.
\]
This technique was also applied by Glas and Hochfilzer to bound the number of rational points on Del Pezzo surfaces of low degree over a global field, assuming the validity of the same conjectural bound for number fields (\cite{glas-hochfilzer}).

We will cover the points of bounded height with projective planes of bounded height in the dual projective space $(\P^3)^{\vee}$. This can be done using a result from Schmidt (\cite{schmidt}). We then apply another result from the same paper to bound the number of such planes asymptotically, and proceed by estimating the number of points of bounded height in the intersection between $X$ and each of these planes. This intersection will be generically a smooth curve. In this case, we use a uniform version of Faltings's Theorem, due to Dimitrov, Gao, and Habegger:

\begin{lemma}[Uniform Faltings's Theorem (\cite{mordell-lang})]\label{faltings}
For each integer $g\ge2$, there exists a constant $c(g) >0$ depending on $[K\colon \Q]$ such that, for every smooth projective curve $C$ over $K$ of (geometric) genus $g$, we have
\[\#{C(K)}\le c(g)^{1+\rho(C)},
\]
where $\rho(C)$ denotes the rank of $\Jac(C)(K)$.
\end{lemma}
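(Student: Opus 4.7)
The plan is to embed $C$ in its Jacobian $J=\Jac(C)$ via an Abel--Jacobi map $j$ (possibly after a bounded-degree base extension), thereby equipping $C(K)\subseteq J(K)$ with the Néron--Tate height $\hat h$ and the associated inner product on $J(K)\otimes_{\Z}\R$, a Euclidean space of dimension $\rho=\rho(C)$. Fix a threshold $\epsilon_0=\epsilon_0(g)>0$ and split
\[
C(K)=C_{\mathrm{small}}\sqcup C_{\mathrm{large}},\qquad C_{\mathrm{large}}:=\{P\in C(K):\hat h(j(P))\ge \epsilon_0\}.
\]
The goal is to bound each piece by a quantity of the form $c(g)^{1+\rho}$, with constants depending only on $g$ and $[K\colon\Q]$.

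For the large-height points I would apply the Vojta--Mumford gap principle in its quantitative form (as refined by Rémond): any two distinct $P,Q\in C_{\mathrm{large}}$ satisfy
\[
\langle j(P),j(Q)\rangle \le \kappa\sqrt{\hat h(j(P))\,\hat h(j(Q))}
\]
for some explicit $\kappa=\kappa(g)<1$. Hence the normalised vectors $j(P)/\norm{j(P)}$ make pairwise angles bounded below by some $\theta(g)>0$, and a sphere-packing estimate in the $\rho$-dimensional space $J(K)\otimes\R$ yields $\#C_{\mathrm{large}}\le c_1(g)^{1+\rho}$. Taken alone this already produces a qualitative Mordell-type bound, but with a constant that a priori depends on the curve through the small-height part.

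For $C_{\mathrm{small}}$ --- which is where the classical treatment loses uniformity and where the genuine input of Dimitrov, Gao, and Habegger enters --- I would invoke their uniform height inequality. On a suitable family $\pi\colon\mathcal C\to S$ of pointed genus-$g$ curves there is a proper Zariski-closed $Z\subsetneq \mathcal C$ and constants $c_2(g),c_3(g)>0$ such that every $P\in \mathcal C(\overline K)\setminus Z$ with $s=\pi(P)$ satisfies
\[
\hat h_{\mathcal C_s}(P)\ge c_2(g)\,h_S(s)-c_3(g).
\]
Combined with Silverman's comparison between $\hat h$ and a naïve Weil height and a Noetherian induction to peel off the exceptional locus $Z$, this bounds $\#C_{\mathrm{small}}$ by a constant depending only on $g$ and $[K\colon\Q]$. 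The main obstacle is precisely this height inequality: its proof rests on the non-degeneracy of the Betti map on the universal family of Jacobians, a deep statement established through Pila--Zannier-style o-minimal techniques together with a careful analysis of the Betti form on $\mathcal C$.
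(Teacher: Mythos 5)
The paper does not actually prove this lemma: it is imported wholesale as Theorem~1 of Dimitrov--Gao--Habegger (``This is \cite[Theorem 1]{mordell-lang}'') and used as a black box, so there is no internal argument to compare yours against. Judged as a reconstruction of the cited proof, your outline has the right architecture and names the right ingredients: an Abel--Jacobi embedding, a height dichotomy on $C(K)$, Mumford--Vojta plus sphere packing in the rank-$\rho$ Euclidean space $J(K)\otimes\R$ for the high points, and the Betti-map height inequality on a family of curves, with Noetherian induction on the exceptional locus, for the rest.

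There is, however, a genuine gap in where you draw the dividing line. You cut at an absolute threshold $\epsilon_0(g)$. The quantitative Mumford and Vojta inequalities carry error terms controlled by the Faltings height $h_F(\Jac C)$ (this is the content of R\'emond's explicit versions; your clean inequality $\langle j(P),j(Q)\rangle\le\kappa\sqrt{\hat h(j(P))\hat h(j(Q))}$ with no error term is only a heuristic), so the packing argument yields a curve-independent bound $c_1(g)^{1+\rho}$ only for points with $\hat h(j(P))\ge c(g)\max\{1,h_F(\Jac C)\}$; in the range between $\epsilon_0(g)$ and that quantity your ``large'' bound is not uniform in $C$. Symmetrically, the set $\{P:\hat h(j(P))\le\epsilon_0(g)\}$ contains every $P$ with $j(P)$ torsion, and bounding that uniformly is the uniform Manin--Mumford/Bogomolov problem, which Dimitrov--Gao--Habegger do not solve (that is K\"uhne's later theorem, cited in the paper as \cite[Theorem 4]{kuhne} precisely because it removes the dependence on $[K\colon\Q]$). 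The correct dichotomy is at $c_1(g)\max\{1,h_F(\Jac C)\}$: above it Vojta--Mumford is uniform, and below it the height inequality $\hat h(j(P))\ge c_2\,h(s)-c_3$ forces either $P\in Z$ or the modular height of $C$ to be bounded, and the finitely many such curves are handled by Northcott --- which is exactly where the dependence on $[K\colon\Q]$ enters. A smaller point: a base extension to define the Abel--Jacobi map would change the rank appearing in the bound, so one should instead use the base-point-free embedding $P\mapsto[(2g-2)P-K_C]$ defined over $K$.
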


This is \cite[Theorem 1]{mordell-lang}. Compare it also with \cite[Theorem 4]{kuhne}, which eliminates the dependence on the degree $[K:\Q]$.

We bound the number of rational points on curves of genus at least $2$ that appear as irreducible components of the intersections between $X$ and rational planes by applying the above result to their normalizations.

We can proceed similarly to bound the number of rational points on the irreducible components of genus $1$, by using a bound on the number of smooth cubic plane curves due to Heath-Brown (\cite{heath-brown}), which is generalized to global fields by Glas and Hochfilzer (\cite{glas-hochfilzer}). Finally, we show that the curves of genus $0$ that appear do not alter our estimation. We remove dependence on $X$ by applying a standard trick due to Heath-Brown.

To reach our goal, we require the following hypothesis:

\begin{hypothesis}[Rank Hypothesis]
For any $K$-Abelian variety $A$, let $N_A$ denote its conductor (that is, the ideal norm of the conductor ideal of $A$) and let $r_A$ denote its rank. Then we have $r_A=o(\log N_A)$ as $N_A\to \infty$.
\end{hypothesis}

This hypothesis is formulated in \cite{heath-brown} for elliptic curves over the rational numbers.

We remark that what we actually need is the a priori weaker bound $r_A=o(h_{\operatorname{Falt}}(A))$, where $h_{\operatorname{Falt}}(A)$ denotes the Faltings's height of $A$. However, the plausibility of the Rank Hypothesis is suggested by the following analytical argument:

Assuming modularity for the Abelian variety $A$ over the number field $K$, this hypothesis would hold for $A$ by a combination of the Generalized Riemann Hypothesis and the Birch and Swinnerton-Dyer conjecture. Indeed, let $L(A,s)$ be the $L$-function associated with the Abelian variety $A$. Then, if we define
\[
\Lambda(A,s)\coloneqq N_A^{s/2}((2\pi)^{-s}\cdot\Gamma(s))^{\dim A}\cdot L(A,s),
\]
by the modularity of $A$ the function $\Lambda(A,s)$ extends to an entire function and satisfies the functional equation
\[
\Lambda(A,2-s)=\pm\Lambda(A,s).
\]
Assuming the Generalized Riemann Hypothesis and proceeding by classical analytic arguments, this functional equation would give us
\[\operatorname{ord}_{s=1}(L(A,s))=O\left(\frac{\log N_A}{\log\log N_A}\right)=o(\log N_A).
\]
Assuming the Birch and Swinnerton-Dyer conjecture, we obtain 
\[
\operatorname{ord}_{s=1}(L(A,s))=r_A,
\]
which gives the Rank Hypothesis. See \cite[Proposition II.1]{mestre} for a detailed proof of these implications in the case of the $L$-function of a modular form.

We now cite some known results that go in the direction of the Rank Hypothesis. By \cite[Theorem 1]{ooe}, for an Abelian variety $A/K$ we have, unconditionally,
\[r_A=O_{K,\dim A}(N_A).\]
In the case of elliptic curves, more evidence is known. Notice that all elliptic curves over the rational number are modular by \cite{breuil}. If we assume that an elliptic curve $E/\Q$ has at least one rational point of order $2$, then it is known that (see \cite[Section 6]{heath-brown})
\[
r_E=O\left(\frac{\log N_E}{\log\log N_E}\right)=o(\log N_E).
\]

Our main result is the following:

\begin{theorem}\label{main-result}
Assume that the Rank Hypothesis holds. Let $X\subseteq \P^3$ be a smooth surface of degree $d\ge 4$, and let $\varepsilon>0$. Then we have $N_{X^{\prime}}(B)\ll_{K,d,\varepsilon} B^{4/3+\varepsilon}$.
\end{theorem}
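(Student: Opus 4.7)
My plan is to adapt the method of Heath-Brown from \cite{heath-brown}, which establishes the analogous bound $N_{X^{\prime}}(B)\ll_{X,\varepsilon} B^{4/3+\varepsilon}$ for smooth cubic surfaces, to the present setting of $d=4$ and $d=5$. The strategy is to cover the rational points of height at most $B$ on $X$ by plane sections of $X$ coming from low-height planes, and then bound the rational points on each plane section using the uniform Faltings theorem (Lemma \ref{faltings}), the Heath-Brown--Testa bound for elliptic curves, and the Rank Hypothesis.

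First I would observe that every rational point $P\in X(K)$ with $H_K(P)\le B$ lies on some rational plane $\Pi\subset\P^3$ of dual height $H_K(\Pi)\ll_K B^{1/3}$, by Minkowski's second theorem (or Siegel's lemma) applied to the three-dimensional $K$-lattice of linear forms on $\P^3$ vanishing at $P$. Schmidt's counting result from \cite{schmidt} then bounds the number of rational planes of height $\ll B^{1/3}$ in $(\P^3)^{\vee}$ by $O_K(B^{4/3})$. For each such plane $\Pi$, the intersection $C_\Pi\coloneqq X\cap\Pi$ is a plane curve of degree $d$ in $\Pi\simeq\P^2$, cut out by an equation of height $\ll_X B^{1/3}$. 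I would decompose $C_\Pi$ into its geometrically irreducible components and estimate the $K$-rational points on each according to its geometric genus $g_i$. Line components are discarded, contributing nothing to $X^{\prime}$. For $g_i\ge 2$, a standard bound in terms of the defining equations gives that the Faltings height of $\Jac(\tilde C_i)$ is $\ll_X \log B$, so under the Rank Hypothesis (in its Faltings-height formulation) $\rho(\tilde C_i)=o(\log B)$, and Lemma \ref{faltings} yields $\#\tilde C_i(K)\le c(g_i)^{1+o(\log B)}=B^{o(1)}$. For $g_i=1$, the analogous conclusion follows from the uniform bound of Heath-Brown and Testa \cite{hb-testa} combined with the Rank Hypothesis on the Jacobian. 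Summed over the $O_K(B^{4/3})$ planes, the contribution from positive-genus components is $O_{X,K,\varepsilon}(B^{4/3+\varepsilon})$, which is the target bound.

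The main obstacle is to control the genus-$0$ components of degree $\ge 2$, to which Lemma \ref{faltings} does not apply. An irreducible rational component $C_i$ of degree $e_i\ge 2$ has at most $O(B^{2/e_i+\varepsilon})\le O(B^{1+\varepsilon})$ rational points of height $\le B$ via a degree-$e_i$ parametrization $\P^1\to C_i$; individually this is smaller than the target $B^{4/3+\varepsilon}$, but a naive sum over all $O(B^{4/3})$ planes gives $O(B^{7/3+\varepsilon})$, which is far too large. The saving comes from the fact that any irreducible plane curve of degree $\ge 2$ lies in a unique plane (its linear span), so the number of our planes that cut out such a component equals the number of irreducible rational curves on $X$ of degree $\ge 2$ defined over $K$ whose spanning plane has height $\le B^{1/3}$. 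For smooth $X$ of Picard rank $1$ (e.g.\ generic quartic K3 surfaces) no such curves exist and the contribution is zero; in general one must bound this count by $O_X(B^{1/3})$ for conics, with correspondingly smaller bounds for higher-degree rational components, so that the total contribution is absorbed into $O_{X,\varepsilon}(B^{4/3+\varepsilon})$. Establishing this count via a Northcott-type bound on the moduli of rational curves of each degree on $X$, or alternatively by importing the $+B$ term from Salberger's bound in \cite{salberger}, is the technical heart of the argument.
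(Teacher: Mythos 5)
Your overall strategy coincides with the paper's: cover the points of height at most $B$ by planes of height $O_K(B^{1/3})$ via Schmidt, count $O_K(B^{4/3})$ such planes, and handle the irreducible components of each plane section by genus, using the uniform Faltings theorem together with the Rank Hypothesis (in its Faltings-height form) for genus $\ge 2$ and the Heath-Brown--Testa bound for genus $1$. All of that is correct and matches the paper.

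The genuine gap is exactly where you place ``the technical heart'': you never establish any bound on the number of rational plane curves of degree $e\ge 2$ contained in $X$, and the routes you sketch --- a Northcott-type bound of shape $O_X(B^{1/3})$ for conics with ``correspondingly smaller'' bounds in higher degree, or importing the $+B$ term from Salberger --- are left unproved and are not what is actually needed. The paper closes this with Lemma \ref{finite-number}, which is both simpler and stronger: for each fixed $e$, a smooth surface of degree $d\ge 4$ contains only \emph{finitely} many rational plane curves of degree $e$. The argument is geometric rather than height-theoretic: the rational plane curves of degree $e$ on $X$ form a quasi-projective subscheme of the Hilbert scheme $\operatorname{Hilb}_X^P$, so they have finitely many irreducible components; if there were infinitely many such curves, the universal family over some component $T$ would dominate $X$ (its image cannot be contained in a curve, which has only finitely many components), making $X$ uniruled and hence of Kodaira dimension $-\infty$ --- contradicting that $X$ is a K3 surface ($d=4$) or of general type ($d\ge 5$). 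With this finiteness, the genus-$0$ components of degree $e\ge 2$ contribute $O_X(1)\cdot O_{X,\varepsilon}(B^{1+\varepsilon})$, which is absorbed into the target bound. Note also that your side remark that a quartic of Picard rank $1$ carries no such curves is incorrect: an irreducible $3$-nodal plane section is a rational plane quartic, and such sections exist on any smooth quartic; only finiteness, not emptiness, is needed. Without a proof of this finiteness (or an adequate substitute), your argument does not close, since summing $B^{1+\varepsilon}$ over all $B^{4/3}$ planes gives $B^{7/3+\varepsilon}$, as you yourself observe.
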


This exponent is smaller than the one in (\ref{salberger-equation}) for $d=4$ and $d=5$, thereby improving the known bound in these cases.

Another advantage of our method is that it works over any number field, so we obtain the same bound for every choice of $K$. To do this, we use results of Paredes and Sasyk (\cite{paredes}) and Glas and Hochfilzer (\cite{glas-hochfilzer}). However, notice that we use the height $H_K$ of the field $K$, while in \cite{paredes} they use the absolute height $H=H_K^{1/[K:\Q]}$. 

We can generalize our result for smooth surfaces contained in some bigger projective space:

\begin{theorem}\label{generalization}
Assume that the Rank Hypothesis holds. Let $n\ge 3$, let $X\subseteq \P^n$ be a non-degenerate non-uniruled smooth surface of degree $d$, and let $\varepsilon>0$. Then we have $N_{X^{\prime}}(B)\ll_{K,n,d,\varepsilon}B^{\frac{n+1}{n}+\varepsilon}$.
\end{theorem}

The proof of this result is analogous to the proof of Theorem \ref{main-result}, except that we cover $X$ by hyperplanes of $\P^n$ instead of by planes of $\P^3$. Since a uniruled surface always has Kodaira dimension $-\infty$, the above theorem applies to surfaces of general type, general elliptic surfaces, abelian surfaces, hyperelliptic surfaces, Enriques surfaces and K3 surfaces. 

We hope to generalize this method in a future paper to obtain results about higher dimensional varieties.

Some articles use a slightly different definition of $H_K$, taking the Archimedean norm in infinite places instead of the maximum norm. However, since both norms differ by a multiplicative constant, this distinction will not matter in what follows, so we shall ignore it.

Given a polynomial $P(x_0,\dotsc,x_n)\in K[x_0,\dotsc, x_n]$, we define its height $H_K(P)$ as the height over $K$ of the projective point whose entries are the coefficients of $P$, and its logarithmic height to be $h_K(P)\coloneqq \log H_K(P)$.

By genus, we always mean the geometric genus, unless explicitly stated. Besides, we will denote by $x_0,x_1,x_2,x_3$ the coordinates of $\P^3$ and by $F(x_0,x_1,x_2,x_3)$ the form of degree $d$ that defines $X$.

Following \cite{glas-hochfilzer}, we define, for $x=(x_0,\dotsc, x_n)\in \O_K^{n+1}$:
\[
\norm{x}\coloneqq \max_{v\mid\infty}\max\set{\norm{x_0}_{v},\dotsc, \norm{x_n}_{v}},
\]
and
\[
\norm{x}_{\infty}\coloneqq\prod_{v\mid\infty}\max\set{\norm{x_0}_{v},\dotsc, \norm{x_n}_{v}}.
\]
Fix $\a_1,\dotsc, \a_h\subseteq \O_K$ to be a full set of representatives in the class group of $K$. Whenever we indicate that an implied constant depends on $K$, we implicitly
allow it to also depend on our choice of the ideal class group representatives. Define
\[
Z_n^{\prime}\coloneqq\set{x\in \O_K^{n+1}\setminus\set{0}\colon (x_0,\dotsc, x_{n+1})=\a_i\text{ for some }i=1,\dotsc, h}.
\]
Let $s_K$ be the number of infinite places of $K$. By \cite[Lemma 2.8]{glas-hochfilzer}, there exist constants $c_1, c_2 > 0$ depending on $K$ such that every member of $\P^n(K)$ has a representative $x\in Z_n^{\prime}$ such that 
\[
c_1\norm{x} \le \norm{x}_{\infty}^{1/s_K} \le c_2\norm{x}.
\]
Hence, if we define
\[
Z_n\coloneqq \set{x\in Z_n^{\prime}\colon c_1\norm{x} \le \norm{x}_{\infty}^{1/s_K} \le c_2\norm{x}},
\]
then every point of $\P^n(K)$ has a representative in $Z_n$. Notice that, for $x\in Z_n^{\prime}$, $H_K(x)$ is of the same order as $\norm{x}_{\infty}$, since $H_K(x)$ is equal to $\norm{x}_{\infty}$ divided by the norm of the ideal $(x_0,\dotsc, x_{n+1})=\a_i$ for some $1\le i\le h$.

\section{Proof of Theorem \ref{main-result}}
We start with the following dichotomy:
\begin{lemma}\label{dichotomy}
Either $H_K(F)\ll_K B^{d\binom{d+3}{3}}$ or there exists a surface $Y\subseteq \P^3$ of degree $d$ such that $X\cap Y$ is a curve, and all $K$-rational points of $X$ of height smaller than $B$ are contained in this curve.
\end{lemma}
\begin{proof}
This is a generalization of \cite[Lemma 5]{algebraic-varieties} to number fields. We can assume that $F$, the form defining $X$, has coefficients in $Z_{\binom{d+3}{3}-1}$. Suppose that $x^{(1)},\dotsc, x^{(N)}$ are all the rational points of $X$ satisfying $H_K(x)\le B$, and write these points with representatives in $Z_3$. Let $M$ be the $\binom{d+3}{3}\times N$ matrix whose $i$-th row consists of the $\binom{d+3}{3}$ possible monomials of degree $d$ in the variables $x_0^{(i)},\dotsc,x_3^{(i)}$.

If $f$ is the vector given by the coefficients of $F$, then $Mf=0$, which shows that the rank of $M$ is at most $\binom{d+3}{3}-1$. Thus, we can find a nonzero vector $g\in Z_{\binom{d+3}{3}-1}$ constructed from subdeterminants of $M$ such that $Mg=0$. Hence $\norm{g}\ll B^{d\binom{d+3}{d}/s_K}$. Let $G$ be the form of degree $d$ corresponding to the vector $g$, and $Y\subseteq \P^3$ be the surface defined by $G$. Then $G(x^{(i)})=0$ for all $1\le i\le N$, hence $\set{x^{(1)},\dotsc, x^{(N)}}\subseteq X\cap Y$. If $G$ is not proportional to $F$, we are done. Otherwise, since both $f$ and $g$ are in $Z_{\binom{d+3}{3}-1}$, we have
\[
\norm{f}\ll\norm{g}\ll B^{d\binom{d+3}{3}/s_K}.
\]
Finally, we conclude that
\[
H_K(F)=H_K(f)\ll_K \norm{f}_{\infty}\ll_K \norm{f}^{s_K}\ll B^{d\binom{d+3}{3}}.
\]
\end{proof}

If we are in the second case of the lemma above, then $N_{X^{\prime}}(B)\le N_{(X\cap Y)^{\prime}}(B)$, where $X\cap Y$ is a curve. This curve has a finite number of components. Let $C\subseteq X\cap Y$ be an irreducible component of degree $e\ge 2$. By \cite[Theorem 1.8]{paredes}, 
\[
N_C(B)\ll_{K,d,\varepsilon} B^{2/e+\varepsilon}\le B^{1+\varepsilon}.
\]
If $e=1$, then $C$ is a line, and by definition it does not contribute to the calculation of $N_{X^{\prime}}(B)$. In this case, we conclude that $N_{X^{\prime}}\ll_{K,d,\varepsilon} B^{1+\varepsilon}<B^{4/3+\varepsilon}$. Therefore, from now on, we will assume that we are in the first case of the lemma above, that is, $H_K(F)\ll B^{d\binom{d+3}{3}}$.

Let $V$ be an $n$-dimensional $K$-vector space, and let $v\in V$. Let $H_K$ be the height on $V$ given by the identification $V=\A_K^n(K)\xhookrightarrow{}\P^n(K)$. Let $1\le k\le n-1$. We start by defining the height in the Grassmannian
\[
\Gr_K(k,n)\coloneqq\set{W\subseteq V\colon W\text{ is a $K$-linear subspace of dimension }k}.
\]
In our argument, we will only use $\Gr_K(3,4)$, which is the dual projective space $(\P^3)^{\vee}$. However, the more general concept of a Grassmanian will be useful if one wants to generalize this argument for varieties of higher dimension. 

Let $W\subseteq V$ be a $k$-dimensional subspace. Let $w_1,\dotsc, w_k$ be any basis of $W$. Consider the exterior product $w_1\wedge \cdots \wedge w_k\in \bigwedge^k V\cong K^{\binom{n}{k}}$. We define $H_K(W)\coloneqq H_K(w_1\wedge\cdots\wedge w_k)$. It is possible to show that this height does not depend on the choice of basis, so $H_K(W)$ is well-defined. For more on the heights of subspaces, see \cite{schmidt}. 

Returning to our original problem, let $x\in X(K)$. We want to find a projective plane $\Pi\cong \P^2$ of small height containing $x$. This is equivalent to finding a $3$-dimensional subspace of a $4$-dimensional space that contains a given vector $x$. By \cite[Theorem 2]{schmidt}, there exists such a plane $\Pi$ with $H_K(\Pi)\ll_K B^{1/3}$. This shows that, for a given $B>0$, the set 
\[
\set{x\in X(K)\colon H_K(x)\le B}.
\]
can be covered by planes of height $O_K(B^{1/3})$. Using now \cite[Theorem 3]{schmidt} (or the more precise version \cite[Theorem 1]{thunder}), we see that the number of such planes is
\[O_{K} \big(\big(B^{1/3}\big)^{4}\big)=O_K(B^{4/3}).
\]

We proceed by counting the number of rational points in the intersection of $X$ with each such plane. Let $\Pi$ be one of the planes of bounded height as above. We have to consider some cases. If $\Pi\subseteq X$, suppose without loss of generality that $\Pi$ is the plane $x_3=0$. This inclusion allows us to conclude that $F(x_0,x_1,x_2,0)$ is the zero polynomial, hence $x_3$ divides $F$, which is then reducible, a contradiction since $X$ is smooth. So this cannot occur. We conclude that $X\cap \Pi$ is a plane curve of degree $d$.

Let $C$ be an irreducible degree $e\le d$ component of this curve, and let $\tilde{C}$ be its normalization. We can compute the genus of $\tilde{C}$ as being
\begin{eqnarray}\label{genus}
g(\tilde{C})=\frac{(e-1)(e-2)}{2}-\sum_{p} \frac{m_p(m_p-1)}{2},
\end{eqnarray}
where the sum is taken over all singular points of $C$, including infinitely near singular points, and $m_P$ denotes the multiplicity of $C$ at $p$ (see, for example, \cite[Example 3.9.2]{hartshorne}). In particular, as $g(\tilde{C})\ge 0$, we see that $C$ has a finite number of singular points, and this number is bounded in terms of $d$. Since $\tilde{C}$ coincides with $C$ outside the finite set of singular points, we see that counting the rational points of $C(K)$ and $\tilde{C}(K)$ is the same up to $O_d(1)$.

If $g=g(\tilde{C})\ge 2$, we can apply the Uniform Faltings's Theorem (Lemma \ref{faltings}) to $\tilde{C}$ to find
\[\# \tilde{C}(K)\le c(g)^{1+\rho(\tilde{C})}.
\]
Let $\varepsilon>0$. We will show that, assuming the Rank Hypothesis, we have 
\[\# \tilde{C}(K)\ll_{K,d,\varepsilon} B^{\varepsilon}.
\]
We start by observing that, since $g\le \frac{(d-1)(d-2)}{2}$, any dependence on $g$ can be transformed into a dependence on $d$. We will use this fact several times in what follows.

The curve $\tilde{C}$ is obtained by normalizing an irreducible component of the intersection $X\cap \Pi$, where $H_K(\Pi)\ll_K B^{1/3}$. This means that we have an equation for $\Pi$ of the form 
\[a_0x_0-a_1x_1-a_2x_2-a_3x_3=0,
\]
where $H_K([a_0\colon a_1\colon a_2\colon a_3])\ll_K B^{1/3}$. Supposing without loss of generality $a_0\ne 0$, the intersection $X\cap \Pi$ is defined by the equation:
\[
F_{\Pi}(x_1,x_2,x_3)\coloneqq F(a_1x_1+a_2x_2+a_3x_3,a_0x_1,a_0x_2,a_0x_3)=0.
\]
Write
\begin{eqnarray*}
F(x_0,x_1,x_2,x_3)&=&\sum_{\substack{e\in \N^4,\\\abs{e}=d}}b_ex_0^{e_0}x_1^{e_1}x_2^{e_2}x_3^{e_3},\text{ and}\\
F_{\Pi}(x_1,x_2,x_3)&=&\sum_{\substack{f\in \N^3,\\\abs{f}=d}}c_fx_1^{f_1}x_2^{f_2}x_3^{f_3}.
\end{eqnarray*}
Expanding the expression for $F_{\Pi}$, we obtain:
\begin{eqnarray*}
F_{\Pi}(x_1,x_2,x_3)&=&\sum_{\substack{e\in \N^4,\\\abs{e}=d}}b_e(a_1x_1+a_2x_2+a_3x_3)^{e_0}x_1^{e_1}x_2^{e_2}x_3^{e_3}\\
&=&\sum_{\substack{e\in \N^4,\\\abs{e}=d}}b_e\sum_{\substack{j\in \N^3,\\\abs{j}=e_0}}\binom{e_0}{j_1,j_2,j_3}(a_1x_1)^{j_1}(a_2x_2)^{j_2}(a_3x_3)^{j_2}(a_0x_1)^{e_1}(a_0x_2)^{e_2}(a_0x_3)^{e_3}\\
&=&\sum_{\substack{e\in \N^4,\\\abs{e}=d}}b_e\sum_{\substack{j\in \N^3,\\\abs{j}=e_0}}\binom{e_0}{j_1,j_2,j_3}a_0^{e_1+e_2+e_3}a_1^{j_1}a_2^{j_2}a_3^{j_3}x_1^{e_1+j_1}x_2^{e_2+j_2}x_3^{e_3+j_3}\\
&=&\sum_{\substack{f\in \N^3,\\\abs{f}=d}}b_{(j_1+j_2+j_3,f_1-j_1,f_2-j_2,f_3-j_3)}\sum_{\substack{j\in \N^3,\\\abs{j}\le d}}\binom{j_1+j_2+j_3}{j_1,j_2,j_3}a_0^{d-j_1-j_2-j_3}a_1^{j_1}a_2^{j_2}a_3^{j_3}x_1^{f_1}x_2^{f_2}x_3^{f_3}.
\end{eqnarray*}
This shows that, for each $f\in \N^4$, $\abs{f}=d$, we have:
\[
c_f=b_{(j_1+j_2+j_3,f_1-j_1,f_2-j_2,f_3-j_3)}\sum_{\substack{j\in \N^3,\\\abs{j}\le d}}\binom{j_1+j_2+j_3}{j_1,j_2,j_3}a_0^{d-j_1-j_2-j_3}a_1^{j_1}a_2^{j_2}a_3^{j_3}.
\]
Therefore, for any place $v$ of $K$:
\begin{eqnarray*}
\max_{\substack{f\in \N^3,\\\abs{f}=d}}\norm{c_f}_v\ll_d \max_{\substack{e\in \N^4,\\\abs{e}=d}}\norm{b_e}_v\cdot \big(\max_{0\le i\le 3} \norm{a_i}_v\big)^d.
\end{eqnarray*}
Furthermore, if $v$ is a non-archimedean place, the explicit constant in the inequality above can be taken to be $1$. Multiplying the equation above for all $v$, we obtain:
\[
H_K(F_{\Pi})\ll_d H_K(F)\cdot H_K([a_0\colon a_1\colon a_2\colon a_3])^d\ll_K H_K(F)\cdot B^{d/3}.
\]
Taking the logarithm on both sides, we obtain an inequality of logarithmic heights:
\[
h_K(F_{\Pi})\ll_{K,d} h_K(F)+\log B+1.
\]
Hence, denoting by $h(\tilde{C})$ the naive height of the curve $\tilde{C}$ (that is, the minimum logarithmic height of equations defining $\tilde{C}$), we have:
\[
h(\tilde{C})\le h_K(F_{\Pi})\ll_{K,d} h_K(F)+\log B+1.
\]
Let $h_{\operatorname{Falt}}(J)$ be the Faltings height of $J$. We want to compare $h_{\operatorname{Falt}}(J)$ with $h(\tilde{C})$. For this, we will also use $h_{\Theta}(J)$, the theta height of $J$ relative to the sixteenth power of a symmetric translation of the theta divisor of $J$. By \cite[Corollary 1.3]{pazuki}:
\[
\abs{h_{\Theta}(J)-\frac{1}{2}\cdot h_{\operatorname{Falt}}(J)}\ll_{g} \log(2+\max\set{h_{\Theta}(J),1}). 
\]
This gives us an inequality:
\[
h_{\operatorname{Falt}}(J)\ll_d \max\set{h_{\Theta}(J),1}
\]
Using \cite[Théorème 1.3 and Proposition 1.1]{remond} and using that $g$ is bounded in function of $d$, we obtain:
\[
h_{\Theta}(J)\ll_d h(\tilde{C})+1.
\]
Combining the two inequalities above and using the inequality obtained for the naive height gives:
\[
h_{\operatorname{Falt}}(J)\ll_d h(\tilde{C})+1\ll_{K,d} h_K(F)+\log B+1.
\]

It is well-known that $\log N_{J}\ll h_{\operatorname{Falt}}(J)$ (one can deduce this inequality for Abelian varieties with semistable reduction over $K$ by using \cite[Theorem A]{de-jong-shokrieh}, and from this deduce the general case by using \cite[Lemma 3.4]{hindry-pacheco}). Therefore, by the Rank Hypothesis:
\[
\rho(\tilde{C})=r_J=o(\log N_{J})=o(h_{\operatorname{Falt}}(J))=o_{K,d}(h_K(F)+\log B).
\]
Hence, there exists a constant $B_0(\varepsilon)>0$ such that, for all $B\ge B_0(\varepsilon)$,
\[
1+\rho(\tilde{C})\ll_{K,d}\varepsilon (h_K(F)+\log B).
\]
Notice that, since we are assuming $H_K(F)\ll_K B^{d\binom{d+3}{3}}$, we have $h_K(F)\ll_{K,d} \log B$. Hence, for $B\le B_0(\varepsilon)$, we have $h_{\operatorname{Falt}}(J)\ll_{K,d} \log B_0(\varepsilon)$. Since the number of Abelian varieties of bounded height is finite, there exist constants $c_1(K,d),c_2(K,d,\varepsilon)>0$ such that 
\[1+\rho(\tilde{C})\ll_{K,d} c_1(K,d)\varepsilon(h_K(F)+\log B)+c_2(K,d,\varepsilon)
\]
for all $\tilde{C}$. Thus, by the Uniform Faltings's Theorem:
\begin{eqnarray*}
\#\tilde{C}(K)\le c(g)^{1+\rho(\tilde{C})}&\le& c(g)^{c_1(K,d)\varepsilon (h_K(F)+\log B)+c_2(K,d,\varepsilon)}\\
&=&c(g)^{c_2(K,d,\varepsilon)}\cdot c(g)^{c_1(K,d)\varepsilon(\log(H_K(F)B))}\\
&\ll_{K,d,\varepsilon}& (H_K(F)B)^{\varepsilon c_1(K,d)\log c(g)}\\
&\ll_{K,d,\varepsilon}& B^{\big(d\binom{d+3}{3}+1\big)\varepsilon c_1(K,d)\log c(g)}.
\end{eqnarray*}
Adjusting the constant $B_0(\varepsilon)$ if needed, we obtain
\[\# \tilde{C}(K)\ll_{K,d,\varepsilon} B^{\varepsilon},
\]
as we wanted.

If $g(\tilde{C})=1$, then either $\tilde{C}(K)=\emptyset$ or $\tilde{C}$ is an elliptic curve. In the first case, there is nothing to do. Suppose that we are in the second case. We will use \cite[Proposition 1.4]{glas-hochfilzer}, which we restate here:

\begin{lemma}
Assume that the Rank Hypothesis holds. Let $E\subseteq \P^n$ be a smooth curve of genus $1$. Then, for any $\varepsilon>0$, we have $N_E(B)\ll_{K,d,m,\varepsilon} B^{\varepsilon}$.
\end{lemma}

If $C$ has no smooth points of height smaller than or equal to $B$, then $N_C(B)\ll_d 1$, since $C$ has $O_d(1)$ singular points. Otherwise, fix a smooth point $x_0\in C$ with $H_K(x_0)\le B$. Then, by applying Riemann-Roch to divisors of the form $m(x_0)$, we can find an embedding $\tilde{C}\xhookrightarrow{}\P^2$ with respect to which 
\[
h(\tilde{C})\ll h(C)+\log H_K(x_0)\ll \log B,
\]
where $h(\tilde{C})$ and $h(C)$ denote the naive heights of $\tilde{C}$ and $C$ with respect to the embeddings $\tilde{C}\xhookrightarrow{}\P^2$ and $C\xhookrightarrow{}\P^3$, respectively. 

We can take $m=2$ in the above lemma to get $N_{\tilde{C}}(B)\ll_{K,d,\varepsilon}B^{\varepsilon}$, with the height induced by the map $\tilde{C}\xhookrightarrow{}\P^2$. The above comparison of heights allows us to conclude that $N_C(B)=O_{K,d,\varepsilon}(B^{\varepsilon})$ for any $\varepsilon>0$. Therefore, the total contribution of the points that come from these curves is $O_{K,d,\varepsilon}(B^{4/3+\varepsilon})$. 

All that is left is to bound the number of rational points in curves of genus $0$ that lie inside $X$. If $C\subseteq X$ is a curve of genus $0$, then either $C(K)=\emptyset$ or $C$ is a rational curve. In the first case, there is nothing to do. In the second case, let $e$ be the degree of $C$. Then $1\le e\le d$. If $e=1$, then $C$ is a line, and by definition it does not contribute to the calculation of $N_{X^{\prime}}(B)$. Suppose $e\ge 2$. By \cite[Theorem 1.8]{paredes}, 
\[
N_C(B)\ll_{K,d,\varepsilon} B^{2/e+\varepsilon}\le B^{1+\varepsilon}.
\]

Since the contribution of a single rational curve of degree $e\ge 2$ is less than $O_{K,d,\varepsilon}(B^{4/3+\varepsilon})$, and since we have $O_d(1)$ possibilities for $e$, we conclude the proof of Theorem \ref{main-result} if we show the following:

\begin{lemma}\label{finite-number}
Let $X\subseteq\P^3$ be a smooth surface of degree $d\ge 4$. Fix a positive integer $e$. Then $X$ contains finitely many rational plane curves of degree $e$. Furthermore, the number of such curves is $O_d(1)$.
\end{lemma}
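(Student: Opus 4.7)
The plan is to argue by contradiction, using that a positive-dimensional family of rational curves on $X$ would force $X$ to be uniruled, which contradicts the birational geometry of a smooth surface in $\P^3$ of degree at least $4$.

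The space of plane curves of degree $e$ in $\P^3$ is naturally a quasi-projective variety of finite type: for $e\ge 2$ it is the projective bundle over $(\P^3)^{\vee}$ whose fibre over $\Pi$ is $\P(H^0(\Pi,\O_\Pi(e)))$, and for $e=1$ it is $\Gr(2,4)$. Inside it, the locus $\mathcal{H}_X$ of curves contained in $X$ is closed, and the sublocus $\mathcal{H}_X^{\mathrm{rat}}\subseteq \mathcal{H}_X$ of those that are furthermore geometrically integral of geometric genus $0$ is constructible. If $X$ contained infinitely many rational plane curves of degree $e$, then $\mathcal{H}_X^{\mathrm{rat}}(\bar{K})$ would be infinite, which forces some irreducible component of the Zariski closure of $\mathcal{H}_X^{\mathrm{rat}}$ in $\mathcal{H}_X$ to be positive-dimensional over $\bar{K}$. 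I would then pick a smooth irreducible curve $T$ mapping into this component, thereby obtaining a family $\{C_t\}_{t\in T}$ of pairwise distinct rational plane curves of degree $e$ on $X_{\bar{K}}$.

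Next, I would form the universal curve $\mathcal{U}\to T$ together with its evaluation morphism $\mathrm{ev}\colon \mathcal{U}\to X$, and observe that $\dim\mathcal{U}=2$. Let $Z=\mathrm{ev}(\mathcal{U})\subseteq X$. If $\dim Z=1$, then each irreducible curve $C_t$ would be an irreducible component of the fixed one-dimensional variety $Z$, contradicting the fact that the $C_t$ are pairwise distinct while $T$ is one-dimensional. Therefore $\dim Z=2$, and by irreducibility of $X$ we conclude $Z=X$, so $X$ is uniruled.

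To close the argument, I would contradict uniruledness via adjunction: $K_X\cong \O_X(d-4)$ is trivial for $d=4$ and ample for $d\ge 5$, so $X$ is either a K3 surface ($\kappa(X)=0$) or of general type ($\kappa(X)=2$). In both cases $\kappa(X)\ne -\infty$, whereas any uniruled variety satisfies $\kappa=-\infty$, giving the required contradiction. The main subtlety is the dichotomy on $\dim Z$ and setting up the parameter space of rational curves correctly; the remainder reduces to standard facts from the classification of surfaces.
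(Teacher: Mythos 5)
Your proposal is correct and follows essentially the same route as the paper: parameterize the rational plane curves of degree $e$ on $X$ by a quasi-projective scheme (the paper uses the Hilbert scheme, you use the explicit space of plane curves over $(\P^3)^{\vee}$, an immaterial difference), extract a positive-dimensional family, show the universal family dominates $X$ so that $X$ is uniruled with $\kappa(X)=-\infty$, and contradict this with the fact that a smooth surface of degree $d\ge 4$ is a K3 surface or of general type. The dichotomy on the dimension of the image of the evaluation map and the final adjunction step match the paper's argument point for point.
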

\begin{proof}
Assume by contradiction that $X$ contains infinitely many rational plane curves of degree $e$. A plane curve of degree $e$ has arithmetic genus $\frac{(e-1)(e-2)}{2}$, hence the Hilbert polynomial of any such curve is given by
\[
P(m)=m\cdot e+1-\frac{(e-1)(e-2)}{2}.
\]
Consider the Hilbert Scheme $\operatorname{Hilb}_X^P$ that parameterizes the subvarieties of $X$ with Hilbert Polynomial $P$. It is a projective scheme (see \cite[Section I.1]{kollar}). Notice that the irreducible plane curves of degree $e$ inside $X$ form an open set $\operatorname{Irr}_X^e\subseteq \operatorname{Hilb}_X^P$. Let $\operatorname{Rat}_X^e\subseteq \operatorname{Irr}_X^e$ be the set of rational plane curves of degree $e$ inside $X$. By (\ref{genus}), being a rational curve is a closed condition in $\operatorname{Irr}_X^e$. Hence $\operatorname{Rat}_X^e$ is a closed subvariety of $\operatorname{Irr}_X^e$, and therefore it is a quasi-projective scheme. In particular, it has finitely many irreducible components.

By our hypothesis, $\operatorname{Rat}_X^e$ has infinitely many points, hence there is an irreducible component $T\subseteq \operatorname{Rat}_X^e$ which contains infinitely many points. For any $t\in T$, let $C_t$ be the curve associated with $t$. Let $U\subseteq X\times T$ be the universal family over $T$, that is,
\[
U=\set{(x,t)\colon x\in C_t}.
\]
Consider the projection in the first coordinate $\pi\colon U\to X$. Suppose that this map is not dominant. Then its image $\bigcup_{t\in T}C_t$ must lie inside a proper subvariety of $X$. As $X$ is a surface, this means $\bigcup_{t\in T}C_t$ is contained in a curve. But this is impossible, since such a curve would have infinitely many irreducible components.

We conclude that $\pi\colon U\to X$ is dominant, that is, $X$ is covered by a family of rational curves. This shows that $X$ is uniruled, and thus it has Kodaira dimension $-\infty$. 

However, as $X$ is a smooth surface of degree $d\ge 4$, it is either a K3 surface (if $d=4$) or a surface of general type (if $d\ge 5$). In the first case, $X$ has Kodaira dimension $0$, and in the second case it has Kodaira dimension $2$. This contradiction proves the finiteness of $\operatorname{Rat}_X^e$.

To conclude the proof of the Lemma, it suffices to show that the degree of $\operatorname{Rat}_X^e$ is $O_d(1)$. Define $\operatorname{Rat}_{\P^3}^e$ as the set of rational plane curves of degree $e$ inside $\P^3$. By the same argument as above, $\operatorname{Rat}_{\P^3}^e$ is a quasi-projective scheme. Clearly, $\operatorname{Rat}_X^e=\operatorname{Rat}_{\P^3}^e\cap \operatorname{Hilb}_X^P$, hence by Bézout's Theorem it suffices to show that the degree of $\operatorname{Hilb}_X^P$ is $O_d(1)$.

Let $N=\binom{d+3}{3}-1$, and $\P^N$ be the moduli space of surfaces of degree $d$ contained in $\P^3$. Consider the incidence variety
\[
I=\set{(Y,C)\in \P^N\times \operatorname{Hilb}_{\P^3}^P\colon C\subseteq Y}.
\]
We have a projection $p\colon I\to \P^N$, with respect to which $p^{-1}(X)\cong \operatorname{Hilb}_X^P$. As the degree of a variety is stable by algebraic deformation, we see that $\operatorname{Hilb}_X^P$ has degree $O_d(1)$, as we wanted.
\end{proof}

This concludes the proof of Theorem \ref{main-result}.

\section{Proof of Theorem \ref{generalization}}
We can adapt our approach to bound the number of rational points of bounded height on surfaces lying inside a larger projective space. Let $X\subseteq \P^n$ be a non-degenerate smooth surface of degree $d$ over $K$. The main difference here is that the ideal of $X$ is generated by several forms. Instead of Lemma \ref{dichotomy}, we use \cite[Lemma 5]{broberg}, and compare $\norm{\cdot}$ with $H_K(\cdot)$ as before by using the sets of representatives $Z_m$.

By \cite[Theorem 2]{schmidt}, the set of points of $X(K)$ of height smaller than $B$ can be covered by hyperplanes of height $O_{K,n}(B^{1/n})$, and by \cite[Theorem 3]{schmidt} there are $O_{K,n}(B^{\frac{n+1}{n}})$ such planes.

Since $X$ is non-degenerate, for each such hyperplane $\Pi$ the intersection $X\cap \Pi$ is a curve $C$ of degree $d$. By Castelnuovo's bound, $p_a(C)\ll_{n,d}1$, where $p_a(C)$ denotes the arithmetic genus of $C$. By \cite[Example 3.9.2]{hartshorne},
\[
g(C)=p_a(C)-\sum_{p}\frac{m_p(m_p-1)}{2},
\]
This shows that the number of singularities of $C$ is $O_{n,d}(1)$. The proof now follows identically to the proof of Theorem \ref{main-result}, and it suffices to show the following generalization of Lemma \ref{finite-number}:

\begin{lemma}
Let $X\subseteq \P^n$ be a non-uniruled surface of degree $d$. Fix a positive integer $e$. Then $X$ contains finitely many rational curves of degree $e$. Furthermore, the number of such curves is $O_{n,d}(1)$.
\end{lemma}
\begin{proof}
It suffices to apply the argument in the proof of Lemma \ref{finite-number} to any possible Hilbert polynomial of a rational curve of degree $e$ contained in $X$. By Castelnuovo's bound, there are finitely many such polynomials. We conclude, as in the proof of Lemma \ref{finite-number}, that if $X$ contained infinitely many rational curves of degree $e$, then $X$ would be uniruled, proving the lemma.
\end{proof}
\printbibliography
\end{document}